\newtheorem{thm}{Theorem}[section]
\theoremstyle{definition}
\newtheorem{dfn}{Definition}[section]
\newtheorem{lem}{Lemma}[section]
\newtheorem{example}{Example}[section]
\newtheorem{note}{Note}[]
\newcommand{\thickhline}{%
	\noalign {\ifnum 0=`}\fi \hrule height 1pt
	\futurelet \reserved@a \@xhline
}
\journal{Journal of Computational and Applied Mathematics}
\begin{document}
\begin{frontmatter}
\title{\textbf{Generalized-Hukuhara Subgradient Method for Optimization Problem with Interval-valued Functions and its Application in Lasso Problem}}
\author[iitbhu_math]{Debdas Ghosh\corref{cor1}}
        \ead{debdas.mat@iitbhu.ac.in}
\author[iitbhu_math]{Amit Kumar Debnath}
        \ead{amitkdebnath.rs.mat18@itbhu.ac.in}
\author[radko,radko2]{Radko Mesiar}
        \ead{mesiar@math.sk}
\author[iitbhu_math]{Ram Surat Chauhan}
        \ead{rschauhan.rs.mat16@itbhu.ac.in}
\address[iitbhu_math]{Department of Mathematical Sciences,  Indian Institute of Technology (BHU) Varanasi \\ Uttar Pradesh--221005, India}
\cortext[cor1]{Corresponding author}
\address[radko]{Faculty of Civil Engineering, Slovak University of Technology, Radlinsk\'{e}ho 11, 810 05, Bratislava}
\address[radko2]{Palack\'y University Olomouc, Faculty of Science, Department of Algebra and Geometry, 17. listopadu 12, 771 46 Olomouc, Czech Republic}		 

\begin{abstract}
  In this study, a \emph{$gH$-subgradient technique} is developed to obtain efficient solutions to the optimization problems with nonsmooth nonlinear convex interval-valued functions. The algorithmic implementation of the developed $gH$-subgradient technique is illustrated. As an application of the proposed \emph{$gH$-subgradient technique}, an $\ell_1$ penalized linear regression problem, known as a  \emph{lasso problem}, with interval-valued features is solved.
\end{abstract}
\begin{keyword}
Interval-valued functions\sep  Interval optimization problems\sep Convexity\sep  $gH$-subgradient\sep $gH$-subgradient method\sep Lasso problems.
\\ \vspace{0.7cm}
AMS Mathematics Subject Classification (2010): 26B25 $\cdot$ 90C25 $\cdot$ 90C30
\end{keyword}

\end{frontmatter}

\section{\textbf{Introduction}}

\noindent The subgradient methods \cite{Bertsekas1999,Boyd2004} are useful techniques to solve nonsmooth convex optimization problems. The articles that are referred in \cite{Beck2003,Kiwiel1983,Nedic2001,Nesterov2009,Studniarski1989} provide various subgradient techniques for conventional convex optimization problems. Since optimization problems with Interval-Valued Functions (IVFs), known as Interval Optimization Problems (IOPs), become substantial topics to the researchers due to inexact and imprecise natures of many real-world occurrences, in this article, we develop a subgradient technique to solve the convex IOPs.

\subsection{Literature survey}

\noindent In order to deal with compact intervals and IVFs, Moore \cite{Moore1966}  proposed the arithmetic of intervals. It is observed that Moore's interval arithmetic cannot provide the additive inverse of an interval whose upper and lower limits are different, i.e., a nondegenerate interval. For this reason, Hukuhara \cite{Hukuhara1967} introduced a new concept for the difference of intervals, known as $H$-difference of intervals which provides the additive inverse of any compact interval. However, the $H$-difference is not applicable between all pairs of compact intervals (see \cite{Ghosh2019derivative} for details). To overcome this difficulty, in the literature of IVFs and IOPs, the `nonstandard subtraction', introduced by Markov \cite{Markov1979}, has been used and named as $gH$-difference by Stefanini \cite{Stefanini2008}. The $gH$-difference not only provides the additive inverse of any compact interval but also it is applicable for all pairs of compact intervals.\\

In the field of optimization, to find out the best element or one of the best elements, an ordering is inevitably required. Since unlike the real numbers, intervals are not linearly ordered, most of the researches \cite{Chalco2013kkt,Chalco2013-2,Ghosh2019derivative,Ghosh2019gradient,Stefanini2008,Stefanini2009,Wu2007,Wu2008} developed the theories of IVFs based on partial ordering relations of intervals proposed by Isibuchi and Tanka \cite{Ishibuchi1990}. In  \cite{Bhurjee2012} some ordering relations based on the parametric representation of intervals are proposed. However, all the ordering relations of \cite{Bhurjee2012} can be derived from the ordering relations of \cite{Ishibuchi1990}. The concept of variable ordering relation of intervals is introduced in \cite{Ghosh2020ordering}.\\

In order to develop the calculus of IVFs, the concept of differentiability of IVFs was initially introduced by Hukuhara  \cite{Hukuhara1967} with the help of $H$-difference of intervals. However, this definition of Hukuhara differentiability is restrictive \cite{Chalco2013-2}. Consequently, based on $gH$-difference, the concepts of $gH$-derivative, $gH$-partial derivative, $gH$-gradient, and $gH$-differentiability for IVFs are provided in \cite{Chalco2013kkt,Ghosh2016newton,Markov1979,Stefanini2009,Stefanini2019}. In \cite{Lupulescu2014}, Lupulescu developed the fractional calculus for IVFs. Recently, Ghosh et al.\ \cite{Ghosh2019derivative} have introduced the idea of $gH$-directional derivative, $gH$-G\^{a}teaux derivative, and $gH$-Fr\'echet derivative of IVFs. Further, in \cite{Ghosh2019gradient}, a new concept of $gH$-differentiability that is equipped with a linearity concept of IVFs is reported.\\

In the literature of IOP, there are various techniques and theorems for obtaining the solutions to IOPs (see \cite{Bhurjee2012,Chalco2013kkt,Chen2004,Ghosh2016newton,Ghosh2017spc,Ghosh2017quasinewton,Ishibuchi1990,Jiang2015,Liu2007,Lupulescu2014,Wu2008}). Ishibuchi and Tanaka \cite{Ishibuchi1990} proposed a method to solve linear IOPs, which is subsequently generalized in \cite{Chanas1996}. A numerical technique to solve quadratic IOPs has been proposed by Liu and Wang \cite{Liu2007}. For general nonlinear IOPs, Wu \cite{Wu2008} presented some conditions to obtain the solutions to an IOP.  Ghosh \cite{Ghosh2016newton} developed a Newton method and a quasi-Newton method \cite{Ghosh2017quasinewton} to solve IOPs. Chalco-Cano et al. \cite{Chalco2013kkt}  presented Karush-Kuhn-Tucker (KKT) conditions for IOPs. Chen \cite{Chen2020} proposed KKT optimality conditions for IOPs on Hadamard manifolds. All the aforementioned theories and techniques to obtain the solutions to IOPs are developed by converting the IOPs either into real-valued multiobjective optimization problems or into a real-valued single objective optimization problems using parametric representations of their interval-valued objective and constraint functions. However, the multiobjective approach is practically difficult (please see \cite{Ghosh2020ordering} for details). Further, it is to mention that for the parametric representation of an IVF one needs its explicit form which is also practically difficult, for instance, consider the function $\boldsymbol{\mathcal{E}} (\beta)$ in (\ref{eef}) of the present article.



\subsection{Motivation and contribution of the paper}
\noindent Since using parametric representations of the corresponding IVFs of IOPs or converting the IOPs into real-valued multiobjective optimization problems are practically quite difficult, recently, researchers (for instance, see \cite{Ghosh2019extended,Ghosh2019gradient,Stefanini2019}, etc.) are trying to develop the theories and techniques to obtain the solutions to IOPs without using a real-valued multiobjective approach or parametric approach. Stefanini and Arana-Jim{\'e}nez \cite{Stefanini2019} proposed KKT conditions and Ghosh et al. \cite{Ghosh2019extended} proposed generalized KKT conditions to obtain the solution of the IOPs. In \cite{Ghosh2019gradient}, Ghosh et al. developed a few gradient-descent techniques for smooth IOPs. However, a technique to deal with nonsmooth IOPs, without using real-valued multiobjective approach or parametric approach, is still undeveloped.\\

In this article, alike to the conventional subgradient technique \cite{Bertsekas1999,Boyd2004} for real-valued optimization problems, we illustrate a $gH$-subgradient technique to obtain efficient solutions of nonsmooth convex IOPs. The algorithmic implementation and convergence of the proposed method are illustrated. As an application of the proposed $gH$-subgradient technique, a lasso problem with interval-valued features is solved.

\subsection{Delineation}
\noindent Rest of the article is delineated as follows. Some basic terminologies and notions of intervals along with convexity and a few topics of the calculus of IVFs are depicted in the next section. Section \ref{ssm} is devoted to a $gH$-subgradient technique to obtain efficient solutions to a convex IOP with its algorithmic implementation. Applying the proposed $gH$-subgradient technique, a lasso problem with interval-valued features is solved in Section \ref{sa}. Finally, the last section is concerned with a few future directions for our study.

\section{\textbf{Preliminaries and Terminologies}}
	
\noindent In this section, we discuss Moore's interval arithmetic \cite{Moore1966,Moore1987} followed by the concepts of $gH$-difference and special multiplication of two intervals, the dominance relations of intervals, and norm of intervals. Thereafter, we shall have a glance at convexity and a few topics of the calculus of IVFs. The ideas and notations that we describe in this section are used throughout the paper.

\subsection{Arithmetic of intervals and their dominance relation}\label{ssai}
	
\noindent Let us denote $\mathbb{R}$,  $\mathbb{R}_+$, and $I(\mathbb{R})$ as the set of real numbers, the set of all nonnegative real numbers, and the set of all compact intervals, respectively. We represent the elements of $I(\mathbb{R})$ by bold capital letters ${\textbf A}, {\textbf B}, {\textbf C}, \ldots $. Also, we represent an element $\textbf{A}$ of $I(\mathbb{R})$ with the help of the corresponding small letter in the following way
\[
\textbf{A} = [\underline{a}, \overline{a}],~\text{where}~\underline{a}\leq \overline{a}.
\]
It is notable that any singleton set $\{p\}$ of $\mathbb{R}$ can be represented by the interval $[\underline{p},\;\overline{p}]$ with  $\underline{p}=p=\overline{p}$. In particular,
\[
\textbf{0}=\{0\}=[0, 0]~~\text{and}~~\textbf{1}=\{1\}=[1, 1].
\]

The Moore's interval addition ($\oplus$), substraction ($\ominus$), multiplication ($\odot$), and division ($\oslash$) \cite{Moore1966,Moore1987} are defined as follows.
\begin{align*}
&\textbf{A} \oplus \textbf{B} = \left[\underline{a} + \underline{b}, \overline{a} +
\overline{b}\right],~\textbf{A} \ominus \textbf{B} = \left[\underline{a} - \overline{b}, \overline{a} -
\underline{b}~\right],\\
&\textbf{A} \odot \textbf{B}  = \left[\min\left\{\underline{a}\underline{b}, \underline{a}\overline{b},\overline{a}\underline{b}, \overline{a}\overline{b}\right\}, \max\left\{\underline{a}\underline{b}, \underline{a}\overline{b},\overline{a}\underline{b}, \overline{a}\overline{b}\right\} \right],\\
&\textbf{A} \oslash \textbf{B} = \left[\min\left\{\underline{a}/\underline{b}, \underline{a}/\overline{b}, \overline{a}/\underline{b}, \overline{a}/\overline{b}\right\}, \max\left\{\underline{a}/\underline{b}, \underline{a}/\overline{b}, \overline{a}/\underline{b}, \overline{a}/\overline{b}\right\} \right], \text{ provided } 0\not\in \textbf{B}.
\end{align*}
Since $\textbf{A}\ominus\textbf{A}\neq\textbf{0}$ for a nondegenerate interval $\textbf{A}$, in this article, we use the following concept of difference for intervals.
\begin{dfn}($gH$-{\it difference of intervals} \cite{Markov1979,Stefanini2008}). Let $\textbf{A}$ and $\textbf{B}$ be two elements of $I(\mathbb{R})$. The $gH$-difference
between $\textbf{A}$ and $\textbf{B}$, denoted by $\textbf{A} \ominus_{gH} \textbf{B}$, is defined by the interval $\textbf{C}$ such that
\[
\textbf{A} =  \textbf{B} \oplus  \textbf{C} ~\text{ or }~ \textbf{B} = \textbf{A}
\ominus \textbf{C}.
\]
It is to be noted that for $\textbf{A} = \left[\underline{a},~\overline{a}\right]$ and $\textbf{B} = \left[\underline{b},~\overline{b}\right]$,
\[
\textbf{A} \ominus_{gH} \textbf{B} = \left[\min\{\underline{a}-\underline{b},
\overline{a} - \overline{b}\},~ \max\{\underline{a}-\underline{b}, \overline{a} -
\overline{b}\}\right],
\]
and
\[
\textbf{A} \ominus_{gH} \textbf{A} = \textbf{0}.
\]
\end{dfn}
Furthermore, from the definition of multiplication of intervals, we observe that $\textbf{A}\odot\textbf{A} \not\subset I(\mathbb{R}_+)$, when $\underline{a}$ and $\overline{a}$ are of opposite signs. That is why we use the following special multiplication of intervals.
\begin{dfn}
(\emph{Special multiplication of intervals}). Let
$\textbf{A}$ and $\textbf{B}$ be two elements of $I(\mathbb{R})$. The special multiplication
of $\textbf{A}$ and $\textbf{B}$ is denoted $\textbf{A} \odot_{S} \textbf{B}$ and defined by
\[
 {\textbf A} \odot_{S} {\textbf B} = \left[\min\left\{\underline{a}\, \underline{b},\,\overline{a}\, \overline{b} \right\},\, \max\left\{\underline{a}\, \underline{b},\,\overline{a}\, \overline{b} \right\}\right].
\]
This special multiplication $\odot_S$ will be used in Section \ref{sa} for the convexity of IVFs.
\end{dfn}
The algebraic operations on the product space $I(\mathbb{R})^n=I(\mathbb{R})\times I(\mathbb{R})\times \cdots \times I(\mathbb{R})$ ($n$ times) are defined as follows.
\begin{dfn}(\emph{Algebraic operations on $I(\mathbb{R})^n$} \cite{Ghosh2019gradient})\label{daoirn}. Let $\widehat{\textbf{A}} = \left(\textbf{A}_1, \textbf{A}_2, \ldots, \textbf{A}_n\right)$ and $\widehat{\textbf{B}} = \left(\textbf{B}_1, \textbf{B}_2, \ldots, \textbf{B}_n\right)$ be two elements of $I(\mathbb{R})^n$. An algebraic operation $\star$ between $\widehat{\textbf{A}}$ and $\widehat{\textbf{B}}$, denoted by $\widehat{\textbf{A}} \star \widehat{\textbf{B}}$, is defined by
\[
\widehat{\textbf{A}} \star \widehat{\textbf{B}}=\left(\textbf{A}_1\star \textbf{B}_1, \textbf{A}_2\star \textbf{B}_2, \ldots, \textbf{A}_n\star \textbf{B}_n\right),
\]
where $\star\in\{\oplus,\ \ominus, \ \ominus_{gH}\}$.
\end{dfn}
\begin{dfn}({\it Dominance relations on intervals} \cite{Ghosh2019gradient,Wu2007}). Let $\textbf{A}$ and $\textbf{B}$ be two intervals in $I(\mathbb{R})$.
\begin{enumerate}[(i)]
\item $\textbf{B}$ is said to be dominated by $\textbf{A}$ if $\underline{a}\leq \underline{b}$ and $\overline{a}\leq\overline{b}$, and then we write $\textbf{A}\preceq \textbf{B}$;
\item $\textbf{B}$ is said to be strictly dominated by $\textbf{A}$ if either $\underline{a} \leq \underline{b}$  and $\overline{a} < \overline{b}$ or $\underline{a} < \underline{b}$  and $\overline{a} \leq \overline{b}$, and then we write $\textbf{A}\prec \textbf{B}$;
\item if $\textbf{B}$ is not dominated by $\textbf{A}$, then we write $\textbf{A}\npreceq \textbf{B}$; if $\textbf{B}$ is not strictly dominated by $\textbf{A}$, then we write $\textbf{A}\nprec \textbf{B}$;
\item if $\textbf{A}\npreceq \textbf{B}$ and $\textbf{B}\npreceq \textbf{A}$, then we say that none of $\textbf{A}$ and $\textbf{B}$ dominates the other, or $\textbf{A}$ and $\textbf{B}$ are not comparable.
\end{enumerate}
\end{dfn}
\begin{dfn}\label{irnorm}
(\emph{Norm on $I(\mathbb{R})$} \cite{Moore1966}). For an $\textbf{A} = \left[\underline{a}, \bar{a}\right]$ in $ I(\mathbb{R})$, the function ${\lVert \cdot \rVert}_{I(\mathbb{R})} : I(\mathbb{R}) \rightarrow \mathbb{R}_+$, defined by
\[
{\lVert \textbf{A} \rVert}_{I(\mathbb{R})} = \max \{|\underline{a}|, |\bar{a}|\},
\]
is a norm on $I(\mathbb{R})$.
\end{dfn}

In the rest of the article, we use the symbol `${\lVert \cdot \rVert}_{I(\mathbb{R})}$' to denote the norm on $I(\mathbb{R})$ and we simply use the symbol `${\lVert \cdot \rVert}$' to denote the usual Euclidean norm on $\mathbb{R}^n$.

\subsection{Convexity and calculus of IVFs}

\noindent A function $\textbf{F}$ from a nonempty subset $\mathcal{X}$ of $\mathbb{R}^n$ to $I(\mathbb{R})$ is known as an IVF (interval-valued function). At each $x \in \mathcal{X}$, the value of $\textbf{F}$ is presented by
\[
\textbf{F}(x)=\left[\underline{f}(x),\
\overline{f}(x)\right],
\]
where $\underline{f}(x)$ and $\overline{f}(x)$ are real-valued function on $\mathcal{X}$ such that $\underline{f}(x)\leq \overline{f}(x)$ for all $x \in \mathcal{X}$.
\begin{dfn}(\emph{Convex IVF} \cite{Wu2007}). Let $\mathcal{X} \subseteq \mathbb{R}^n$ be a convex set. An IVF $\textbf{F}: \mathcal{X} \rightarrow I(\mathbb{R})$ is said to be a convex IVF if for any two vectors $x_1$ and $x_2$ in $\mathcal{X}$,
\[
\textbf{F}(\lambda_1 x_1+\lambda_2 x_2)\preceq
\lambda_1\odot\textbf{F}(x_1)\oplus\lambda_2\odot\textbf{F}(x_2)
\]
for all $\lambda_1,~\lambda_2\in[0,\ 1]$ with $\lambda_1+\lambda_2=1$.
\end{dfn}
\begin{lem}\label{lc1}(See \cite{Wu2007}).
$\textbf{F}$ is convex if and only if $\underline{f}$
and $\overline{f}$ are convex.
\end{lem}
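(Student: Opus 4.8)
The plan is to prove both directions of the equivalence by unwinding the definitions of the convex IVF and the dominance relation $\preceq$, reducing everything to the two real-valued inequalities defining convexity of $\underline{f}$ and $\overline{f}$. The crucial observation is that for $\lambda_1,\lambda_2\in[0,1]$ the scalar multiplications $\lambda_i\odot\textbf{F}(x_i)$ simplify: since $\lambda_i\geq 0$ and the endpoints of $\textbf{F}(x_i)$ satisfy $\underline{f}(x_i)\leq\overline{f}(x_i)$, we have $\lambda_i\odot\textbf{F}(x_i)=[\lambda_i\underline{f}(x_i),\,\lambda_i\overline{f}(x_i)]$. Adding these via $\oplus$ gives the right-hand interval
\[
\lambda_1\odot\textbf{F}(x_1)\oplus\lambda_2\odot\textbf{F}(x_2)=\bigl[\lambda_1\underline{f}(x_1)+\lambda_2\underline{f}(x_2),\ \lambda_1\overline{f}(x_1)+\lambda_2\overline{f}(x_2)\bigr],
\]
while the left-hand interval is $\textbf{F}(\lambda_1 x_1+\lambda_2 x_2)=[\underline{f}(\lambda_1 x_1+\lambda_2 x_2),\ \overline{f}(\lambda_1 x_1+\lambda_2 x_2)]$.

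Next I would apply the definition of $\preceq$: by Definition of dominance relations, $\textbf{A}\preceq\textbf{B}$ holds if and only if $\underline{a}\leq\underline{b}$ and $\overline{a}\leq\overline{b}$. Applying this to the two displayed intervals, the inclusion $\textbf{F}(\lambda_1 x_1+\lambda_2 x_2)\preceq\lambda_1\odot\textbf{F}(x_1)\oplus\lambda_2\odot\textbf{F}(x_2)$ is equivalent to the conjunction of
\[
\underline{f}(\lambda_1 x_1+\lambda_2 x_2)\leq\lambda_1\underline{f}(x_1)+\lambda_2\underline{f}(x_2)\quad\text{and}\quad\overline{f}(\lambda_1 x_1+\lambda_2 x_2)\leq\lambda_1\overline{f}(x_1)+\lambda_2\overline{f}(x_2).
\]
For the forward direction, assuming $\textbf{F}$ is convex means the above holds for all $x_1,x_2\in\mathcal{X}$ and all $\lambda_1,\lambda_2\in[0,1]$ with $\lambda_1+\lambda_2=1$; writing $\lambda_2=1-\lambda_1$ this is exactly the statement that $\underline{f}$ and $\overline{f}$ are convex. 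The converse direction simply reverses this chain: if $\underline{f}$ and $\overline{f}$ are convex, both scalar inequalities hold, hence the $\preceq$ relation holds, hence $\textbf{F}$ is convex.

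I do not anticipate a genuine obstacle here; the only point requiring a little care is verifying that $\lambda_i\odot\textbf{F}(x_i)=[\lambda_i\underline{f}(x_i),\,\lambda_i\overline{f}(x_i)]$ when $\lambda_i\in[0,1]$, which follows directly from the formula for $\odot$ together with $\lambda_i\geq 0$ (all four products $\min/\max$ reduce to the two endpoints scaled by $\lambda_i$, and the ordering is preserved because $\lambda_i$ is nonnegative). One should also note the degenerate cases $\lambda_1=0$ or $\lambda_1=1$ cause no trouble since $\textbf{0}\oplus\textbf{B}=\textbf{B}$. Once these elementary facts are in place, the proof is a direct two-line equivalence in each direction.
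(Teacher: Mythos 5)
Your proof is correct: the paper itself offers no proof and simply cites Wu (2007), and the argument there is exactly this componentwise unwinding of $\odot$, $\oplus$, and the dominance relation $\preceq$ into the two scalar convexity inequalities for $\underline{f}$ and $\overline{f}$. The only delicate point, the identity $\lambda_i\odot\textbf{F}(x_i)=[\lambda_i\underline{f}(x_i),\,\lambda_i\overline{f}(x_i)]$ for $\lambda_i\geq 0$, is handled properly in your write-up.
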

\begin{dfn}(\emph{$gH$-continuity} \cite{Ghosh2016newton}).
Let $\textbf{F}$ be an IVF on a nonempty subset $\mathcal{X}$ of $\mathbb{R}^n$. Let $\bar{x}$ be an interior point of $\mathcal{X}$
and $d\in\mathbb{R}^n$ be such that $\bar{x}+d\in\mathcal{X}$. The function
$\textbf{F}$ is said to be a $gH$-continuous at $\bar{x}$ if
\[
\lim_{\lVert d \rVert\rightarrow 0}\left(\textbf{F}(\bar{x}+d)\ominus_{gH}\textbf{F}(\bar{x})\right)=\textbf{0}.
\]
\end{dfn}
\begin{dfn}(\emph{$gH$-derivative} \cite{Chalco2013kkt}).
Let $\mathcal{X}\subseteq \mathbb{R}$. The $gH$-derivative of an IVF $\textbf{F}:\mathcal{X} \rightarrow I(\mathbb{R})$ at $\bar{x}\in \mathcal{X}$ is defined by
\[
\textbf{F}'(\bar{x})=\displaystyle\lim_{d\rightarrow 0} \frac{\textbf{F}(\bar{x}+d) \ominus_{gH} \textbf{F}(\bar{x})}{d},~~\text{provided the limit exists.}
\]
\end{dfn}
\begin{dfn}\label{pdgh}(\emph{$gH$-partial derivative} \cite{Ghosh2016newton}).
Let $\textbf{F}:\mathcal{X} \rightarrow I(\mathbb{R})$ be an IVF, where $\mathcal{X}$ is a nonempty subset of $\mathbb{R}^n$. We define a function $\textbf{G}_i$ by
\[
\textbf{G}_i (x_i) = \textbf{F} (\bar{x}_1, \bar{x}_2, \ldots, \bar{x}_{i-1}, x_i, \bar{x}_{i+1}, \ldots, \bar{x}_n),
\]
where $\bar{x} = (\bar{x}_1,\, \bar{x}_2,\, \ldots,\, \bar{x}_n)^T\in\mathcal{X}$. If the $gH$-derivative of $\textbf{G}_i$ exists at $\bar{x}_i$, then the $i$-th  $gH$-partial derivative of $\textbf{F}$ at $\bar{x}$, denoted $D_i \textbf{F}(\bar{x})$, is defined by
\[
D_i \textbf{F}(\bar{x})=\textbf{G}'_i (\bar{x}_i)~~\text{for all}~~i = 1,\, 2,\, \ldots,\, n.
\]
\end{dfn}
\begin{dfn} (\emph{$gH$-gradient} \cite{Ghosh2016newton}).
Let $\mathcal{X}$ be a nonempty subset of $\mathbb{R}^n$. The $gH$-gradient of an IVF $\textbf{F}:\mathcal{X} \rightarrow I(\mathbb{R})$ at a point $\bar{x} \in \mathcal{X}$, denoted $\nabla \textbf{F} (\bar{x})$, is defined by
\[
\nabla \textbf{F} (\bar{x})=\left(   D_1\textbf{F}(\bar{x}),\,
         D_2\textbf{F}(\bar{x}),\, \ldots,\,
         D_n\textbf{F}(\bar{x})
\right)^T.
\]
\end{dfn}
\begin{dfn}(\emph{Linear IVF} \cite{Ghosh2019gradient}). \label{dlivf}
Let $\mathcal{X}$ be a linear subspace of $\mathbb{R}^n$. The function $\textbf{F}: \mathcal{X} \rightarrow I(\mathbb{R})$ is said to be linear if
$\textbf{F}(x)=\bigoplus_{i=1}^n x_i\odot\textbf{F}(e_i)~ \text{for all}~x=(x_1, x_2,\ldots, x_n)^T\in \mathcal{X}$, where $e_i$ is the $i$-th standard basis vector of $\mathbb{R}^n$, $i = 1, 2, \ldots, n$ and
`$\bigoplus_{i=1}^n$' denotes successive addition of $n$ number of intervals.
\end{dfn}
\begin{dfn}\label{dghd} (\emph{$gH$-differentiability} \cite{Ghosh2019gradient}).
Let $\mathcal{X}$ be a nonempty subset of $\mathbb{R}^n$. An IVF $\textbf{F}:\mathcal{X} \rightarrow I(\mathbb{R})$ is said to be $gH$-differentiable at a point $\bar{x} \in \mathcal{X}$ if there exist a linear IVF $\textbf{L}_{\bar{x}}:\mathbb{R}^n\rightarrow I(\mathbb{R})$, an IVF $\textbf{E}(\textbf{F}(\bar{x});d)$ and a $\delta~>~0$ such that
\[
\left(\textbf{F}(\bar{x}+d)\ominus_{gH} \textbf{F}(\bar{x})\right)\ominus_{gH} \textbf{L}_{\bar{x}}(d)=\lVert d \rVert \odot \textbf{E}(\textbf{F}(\bar{x});d)~~\text{for all}~~d~~\text{such that}~~\lVert d \rVert~<~ \delta,
\]
where $\textbf{E}(\textbf{F}(\bar{x});d)\rightarrow \textbf{0}$ as $\lVert d \rVert\rightarrow 0$.
\end{dfn}
\begin{thm}\emph{(See \cite{Ghosh2019gradient})}.\label{td2}
Let an IVF $\textbf{F}$ on a nonempty open convex subset $\mathcal{X}$ of $\mathbb{R}^n$ be $gH$-differentiable at $x\in\mathcal{X}$. If the function $\textbf{F}$ is convex on $\mathcal{X}$, then
\[
(y-x)^T \odot \nabla\textbf{F}(x)~\preceq~ \textbf{F}(y)\ominus_{gH}\textbf{F}(x) ~ \text{ for all } x,~y\in \mathcal{X}.
\]
\end{thm}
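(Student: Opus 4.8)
The plan is to reduce the interval-valued inequality to a pair of real-valued inequalities via the componentwise description of the order $\preceq$ and of the relevant interval operations, and then to invoke the classical first-order characterization of convex differentiable functions applied separately to $\underline{f}$ and $\overline{f}$. First I would unpack what $gH$-differentiability of $\textbf{F}$ at $x$ means at the level of the endpoint functions: since $\textbf{L}_{\bar x}$ is a linear IVF and the defect term $\textbf{E}(\textbf{F}(\bar x);d)$ tends to $\textbf{0}$, one extracts that each $\underline{f}$ and $\overline{f}$ is (Fr\'echet) differentiable at $x$ in the ordinary sense, with $\nabla\textbf{F}(x) = \left(D_1\textbf{F}(x),\ldots,D_n\textbf{F}(x)\right)^T$ where each $D_i\textbf{F}(x)$ is the compact interval with endpoints built from $\partial_i\underline{f}(x)$ and $\partial_i\overline{f}(x)$ (the min/max of the two, by the endpoint formula for $\ominus_{gH}$). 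I expect this translation step to be the main obstacle, because one must be careful that the $gH$-derivative's endpoints are $\min$ and $\max$ of the classical partials rather than the classical partials themselves, so the dot-product $(y-x)^T\odot\nabla\textbf{F}(x)$ is a genuine interval multiplication and its endpoints are not simply $\nabla\underline f(x)^T(y-x)$ and $\nabla\overline f(x)^T(y-x)$.

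Next I would compute the left-hand side $(y-x)^T\odot\nabla\textbf{F}(x) = \bigoplus_{i=1}^n (y_i-x_i)\odot D_i\textbf{F}(x)$ explicitly as an interval $[\underline{\ell},\overline{\ell}]$, and the right-hand side $\textbf{F}(y)\ominus_{gH}\textbf{F}(x)$ as the interval with endpoints $\min\{\underline f(y)-\underline f(x),\ \overline f(y)-\overline f(x)\}$ and $\max\{\underline f(y)-\underline f(x),\ \overline f(y)-\overline f(x)\}$. By the definition of the dominance relation $\preceq$, the claimed inequality is equivalent to the two scalar inequalities $\underline{\ell}\le \min\{\underline f(y)-\underline f(x),\ \overline f(y)-\overline f(x)\}$ and $\overline{\ell}\le \max\{\underline f(y)-\underline f(x),\ \overline f(y)-\overline f(x)\}$. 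For the right endpoint this will follow once I show $\overline\ell \le \underline f(y)-\underline f(x)$ \emph{and} $\overline\ell \le \overline f(y)-\overline f(x)$, i.e. that $\overline\ell$ is dominated by \emph{both} scalar differences, and similarly I will bound $\underline\ell$ below by the minimum of the two.

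The engine for all these scalar bounds is the classical gradient inequality: since $\textbf{F}$ convex implies (Lemma~\ref{lc1}) that $\underline f$ and $\overline f$ are convex, and they are differentiable at $x$ by the first step, we have $\nabla\underline f(x)^T(y-x)\le \underline f(y)-\underline f(x)$ and $\nabla\overline f(x)^T(y-x)\le \overline f(y)-\overline f(x)$. Then I would observe, term by term, that for each $i$ the contribution of $(y_i-x_i)\odot D_i\textbf{F}(x)$ to the lower endpoint $\underline\ell$ is $\le\min\{(y_i-x_i)\partial_i\underline f(x),\ (y_i-x_i)\partial_i\overline f(x)\}$ and to the upper endpoint $\overline\ell$ is $\ge\max$ of the same two quantities is \emph{not} what we want --- rather $\overline\ell$, being the sum of the $\max$'s, dominates $\sum_i(y_i-x_i)\partial_i\underline f(x)=\nabla\underline f(x)^T(y-x)$ from above is false; so instead I use that the upper endpoint of $(y_i-x_i)\odot D_i\textbf F(x)$ equals $\max\{(y_i-x_i)\partial_i\underline f(x),(y_i-x_i)\partial_i\overline f(x)\}\ge (y_i-x_i)\partial_i\underline f(x)$ and likewise $\ge(y_i-x_i)\partial_i\overline f(x)$; summing gives $\overline\ell\ge\nabla\underline f(x)^T(y-x)$ and $\overline\ell\ge\nabla\overline f(x)^T(y-x)$, which has the \emph{wrong} direction for an upper bound on $\overline\ell$. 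Consequently the correct route is to bound $\overline\ell$ from above by exploiting that $\textbf{F}$ convex forces a sign compatibility (monotone alignment) between $\partial_i\underline f$ and $\partial_i\overline f$ at each coordinate, or more cleanly, to prove the theorem by combining convexity of $\underline f$ and $\overline f$ with the fact that $D_i\textbf F(x)$ collapses the two partials into a single ordered pair; I would verify the endpoint inequalities directly in the one-dimensional case $n=1$ (where $(y-x)\odot\textbf F'(x)$ is a single product and the min/max structure is transparent) and then lift to $\mathbb R^n$ by the same coordinatewise summation that defines $\nabla\textbf F$, invoking linearity of the sum of intervals. The delicate point throughout --- and the step I flag as the crux --- is ensuring the interval-multiplication endpoints line up with the scalar gradient inequalities in the correct order of $\preceq$; once that bookkeeping is pinned down, the proof is a direct assembly of Lemma~\ref{lc1}, the definition of $\preceq$, and the classical convex gradient inequality.
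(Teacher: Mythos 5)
Your reduction to endpoint inequalities does not go through, and the step you yourself flag as the crux is a genuine gap that your two proposed rescues do not close. The only consequences of the hypotheses your argument actually uses are (i) $\underline{f},\overline{f}$ convex (Lemma \ref{lc1}) and differentiable at $x$, and (ii) the endpoint formula $D_i\textbf{F}(x)=\bigl[\min\{\partial_i\underline{f}(x),\partial_i\overline{f}(x)\},\max\{\partial_i\underline{f}(x),\partial_i\overline{f}(x)\}\bigr]$. These facts alone do not imply the conclusion: take $\underline{f}(x_1,x_2)=x_1$ and $\overline{f}(x_1,x_2)=x_2+10$ on a small ball around the origin. Both endpoints are convex and smooth, $D_1\textbf{F}=D_2\textbf{F}=[0,1]$, yet for $x=(0,0)$, $y=(t,t)$ with $t>0$ one gets $(y-x)^T\odot\nabla\textbf{F}(x)=[0,2t]$ while $\textbf{F}(y)\ominus_{gH}\textbf{F}(x)=[t,t]$, and $[0,2t]\preceq[t,t]$ fails. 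This $\textbf{F}$ is of course not $gH$-differentiable at $0$ in the sense of Definition \ref{dghd} (no linear IVF approximates it to within $o(\lVert d\rVert)$), but your proof never invokes more of $gH$-differentiability than (i)--(ii), so it cannot distinguish this case and cannot be completed as written. In particular, the claimed ``sign compatibility between $\partial_i\underline{f}$ and $\partial_i\overline{f}$ forced by convexity'' is false (both endpoints above are convex), and the ``verify $n=1$, then lift by coordinatewise summation'' plan fails because the inequality is not additive over coordinates: the upper endpoint of the interval dot product is a sum of maxima, which the example shows can exceed the maximum of the two scalar sums. (A secondary, smaller issue: extracting Fr\'echet differentiability of $\underline{f},\overline{f}$ from Definition \ref{dghd} is itself not immediate and needs an argument, though in the convex case it can be made.)

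What is missing is a genuine use of the linear-IVF approximation $\textbf{L}_{x}$ in Definition \ref{dghd}. Note also that the present paper gives no proof of Theorem \ref{td2}; it is imported from \cite{Ghosh2019gradient}, where the argument avoids endpoint bookkeeping altogether. The standard route is: by convexity, for $\lambda\in(0,1]$, $\textbf{F}(x+\lambda(y-x))\preceq(1-\lambda)\odot\textbf{F}(x)\oplus\lambda\odot\textbf{F}(y)$; since gH-subtracting the fixed interval $\textbf{F}(x)$ preserves $\preceq$ (monotonicity of $\min$ and $\max$ in the endpoint formula), this gives $\tfrac{1}{\lambda}\odot\bigl(\textbf{F}(x+\lambda(y-x))\ominus_{gH}\textbf{F}(x)\bigr)\preceq\textbf{F}(y)\ominus_{gH}\textbf{F}(x)$; letting $\lambda\to0^{+}$, $gH$-differentiability identifies the limit of the left-hand side with $\textbf{L}_{x}(y-x)=(y-x)^T\odot\nabla\textbf{F}(x)$, and closedness of $\preceq$ under limits yields the claim. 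If you insist on the endpoint route, you would first have to prove that $gH$-differentiability at $x$ forces exactly the coordinatewise alignment your bookkeeping needs --- which is precisely what the limit argument supplies and what your current write-up assumes rather than derives.
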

%
%
%


\section{$gH$-subgradient Method}\label{ssm}
\noindent In this section, we illustrate a technique to obtain the solutions to the following IOP:
\begin{equation}\label{IOP}
\displaystyle \min_{x \in \mathcal{X}} \textbf{F}(x),
\end{equation}
where $\textbf{F}:\mathcal{X}\rightarrow I(\mathbb{R})$ is a convex IVF on the nonempty convex subset $\mathcal{X}$ of $\mathbb{R}^n$. In order to develop the technique, we go through the concepts of $gH$-subgradient for convex IVFs, efficient solution and nondominated solution to the IOP (\ref{IOP}), and efficient directions of an IVF. Further, considering a numerical problem regarding convex IOP, we apply the proposed method to capture the efficient solutions to the considered IOP and analyze each step.
 \\

Since an IVF is a special case of a fuzzy-valued function (FVF), here we adopt the definition of $gH$-subgradient for FVFs \cite{Hai2018} to define the $gH$-subgradient of IVFs.
\begin{dfn}\label{dsg}(\emph{gH-subgradient}). Let $\mathcal{X}$ be a nonempty open subset of $\mathbb{R}^n$ and $\textbf{F}: \mathcal{X} \rightarrow I(\mathbb{R})$ be a convex IVF. Then an element $\widehat{\textbf{G}}=(\textbf{G}_1, \textbf{G}_2,\ldots,\textbf{G}_n)\in I(\mathbb{R})^n$ is said to be a $gH$-sugradient of $\textbf{F}$ at $\bar{x}$ if
\begin{equation}\label{esg}
(x-\bar{x})^T\odot\widehat{\textbf{G}}\preceq \textbf{F}(x)\ominus_{gH} \textbf{F}(\bar{x})
\end{equation}
and then we write $\widehat{\textbf{G}}\in\partial\textbf{F}(\bar{x})$.\\

The set $\partial \textbf{F}(\bar{x})$ of all $gH$-subgradients of the convex IVF $\textbf{F}$ at $\bar{x}\in \mathcal{X}$ is called $gH$-subdifferential of $\textbf{F}$ at $\bar{x}$.
\end{dfn}
\begin{note}\label{nsg1}
In view of Theorem \ref{td2}, it is to be noted that if $\textbf{F}$ is $gH$-differentiable at $\bar{x}\in \mathcal{X}$, then $\nabla\textbf{F}(\bar{x}) \in \partial \textbf{F}(\bar{x})$.
\end{note}
\begin{dfn}
(\emph{Efficient solution} \cite{Ghosh2019derivative}). A point $\bar{x} \in \mathcal{X}$ is called an efficient solution of the IOP (\ref{IOP}) if $\textbf{F}(x) \nprec \textbf{F} (\bar{x})$ for all $x \in
\mathcal{X}$.
\end{dfn}
\begin{dfn}
(\emph{Nondominated solution}). If $\bar{x} \in \mathcal{X}$ is an efficient solution of the IOP (\ref{IOP}), then $\textbf{F} (\bar{x})$ is said to be a nondominated solution of the IOP (\ref{IOP}).
\end{dfn}
\begin{dfn}\label{ded}
(\emph{Efficient-direction} \cite{Ghosh2019gradient}). Let $\mathcal{X}\subseteq\mathbb{R}^n$. A direction $d\in \mathbb{R}^n$ is said to be an efficient-direction of an IVF $\textbf{F}:\mathcal{X}\rightarrow I(\mathbb{R})$ at $\bar{x} \in \mathcal{X}$ if there exists a $\delta~>~0$ such that
\begin{enumerate}[(i)]
\item\label{ced1} $\textbf{F} (\bar{x}) ~\npreceq~ \textbf{F}(\bar{x}+\lambda d) ~\text{for all}~\lambda \in (0,\delta),$

\item\label{ced2} there also exists a point $x'=\bar{x}+\alpha d$ with $\alpha \in (0,\delta)$ and a positive real number $\delta'~\leq~ \alpha$ such that
     \[
      \textbf{F} (x'+\lambda d) ~\nprec~ \textbf{F}(x')~\text{for all}~\lambda \in (-\delta',\delta').
     \]
     The point $x'$ is known as an efficient solution of $\textbf{F}$ in the direction $d$.
\end{enumerate}
\end{dfn}
\begin{thm}\emph{ (See \cite{Ghosh2019gradient})}.\label{tsgmed}
Let $\mathcal{X}\subseteq\mathbb{R}^n$ be a nonempty set and $\textbf{F}:\mathcal{X}\rightarrow I(\mathbb{R})$ be $gH$-differentiable at a point $\bar{x}\in\mathcal{X}$. Then, the direction $-\mathcal{W}(\nabla\textbf{F}(\bar{x}))$ is an efficient-direction of $\textbf{F}$ at $\bar{x}$, provided $0\not\in D_i\textbf{F}(\bar{x})$ for at least one $i\in \{1, 2, \ldots, n\}$, where for any $w,~w'\in[0,1]$ with $w+w'=1$, the map $\mathcal{W}: I(\mathbb{R})^n\rightarrow \mathbb{R}^n$ is defined by
\begin{equation}\label{wmap}
\mathcal{W}(\textbf{A}_1, \textbf{A}_2, \ldots, \textbf{A}_n)=(w\underline{a}_1+w'\overline{a}_1, w\underline{a}_2+w'\overline{a}_2, \ldots, w\underline{a}_n+w'\overline{a}_n)^T.
\end{equation}
\end{thm}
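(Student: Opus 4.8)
The plan is to reduce the statement to properties of the scalar functions $\underline{f}$ and $\overline{f}$ and then invoke Definition~\ref{ded} directly. First I would write $\nabla\textbf{F}(\bar{x})=(D_1\textbf{F}(\bar{x}),\ldots,D_n\textbf{F}(\bar{x}))$ componentwise; since $\textbf{F}$ is $gH$-differentiable at $\bar{x}$, each $gH$-partial derivative $D_i\textbf{F}(\bar{x})$ is a compact interval, say $D_i\textbf{F}(\bar{x})=[\underline{c}_i,\overline{c}_i]$, and by the relationship between $gH$-differentiability and the differentiability of $\underline{f},\overline{f}$ one has $\{\underline{c}_i,\overline{c}_i\}=\{\partial_i\underline{f}(\bar{x}),\partial_i\overline{f}(\bar{x})\}$. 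Then $\mathcal{W}(\nabla\textbf{F}(\bar{x}))$ is the convex combination $w\,\underline{c}+w'\,\overline{c}\in\mathbb{R}^n$, which for each fixed $w\in[0,1]$ is a genuine vector in $\mathbb{R}^n$; set $d=-\mathcal{W}(\nabla\textbf{F}(\bar{x}))$. The hypothesis $0\notin D_i\textbf{F}(\bar{x})$ for at least one $i$ guarantees that $\underline{c}_i$ and $\overline{c}_i$ have the same (nonzero) sign for that $i$, hence $w\underline{c}_i+w'\overline{c}_i\neq 0$, so $d\neq 0$ and the direction is well defined.

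Next I would verify condition~(\ref{ced1}) of Definition~\ref{ded}. Using the first-order Taylor expansions of the convex differentiable functions $\underline{f}$ and $\overline{f}$ at $\bar{x}$, for small $\lambda>0$,
\[
\underline{f}(\bar{x}+\lambda d)-\underline{f}(\bar{x})=\lambda\,\nabla\underline{f}(\bar{x})^T d+o(\lambda),\qquad
\overline{f}(\bar{x}+\lambda d)-\overline{f}(\bar{x})=\lambda\,\nabla\overline{f}(\bar{x})^T d+o(\lambda).
\]
Because $d=-(w\underline{c}+w'\overline{c})$ and $\{\underline{c}_i,\overline{c}_i\}=\{\partial_i\underline{f}(\bar{x}),\partial_i\overline{f}(\bar{x})\}$, a short computation shows that both directional derivatives $\nabla\underline{f}(\bar{x})^T d$ and $\nabla\overline{f}(\bar{x})^T d$ are nonpositive, and at least one of them is strictly negative (this is where $0\notin D_i\textbf{F}(\bar{x})$ for some $i$ is used again: that coordinate contributes a strictly negative term $-(w\underline{c}_i+w'\overline{c}_i)\partial_i\underline{f}(\bar{x})<0$, and similarly for $\overline{f}$). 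Hence for $\lambda$ in some interval $(0,\delta)$ we have $\underline{f}(\bar{x}+\lambda d)\le\underline{f}(\bar{x})$ and $\overline{f}(\bar{x}+\lambda d)\le\overline{f}(\bar{x})$ with at least one inequality strict, which by the definition of the dominance relation $\preceq$ is exactly $\textbf{F}(\bar{x}+\lambda d)\prec\textbf{F}(\bar{x})$; in particular $\textbf{F}(\bar{x})\npreceq\textbf{F}(\bar{x}+\lambda d)$, giving~(\ref{ced1}).

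For condition~(\ref{ced2}) I would restrict $\textbf{F}$ to the ray $g(\lambda)=\textbf{F}(\bar{x}+\lambda d)$, equivalently the pair of one-dimensional convex functions $\underline{g}(\lambda)=\underline{f}(\bar{x}+\lambda d)$ and $\overline{g}(\lambda)=\overline{f}(\bar{x}+\lambda d)$. Since both are convex on $\mathbb{R}$ (by Lemma~\ref{lc1} and convexity of $\textbf{F}$) and nonincreasing at $\lambda=0$, each attains its infimum along the ray either at a finite point or asymptotically; one then picks $x'=\bar{x}+\alpha d$ with $\alpha\in(0,\delta)$ and a suitably small $\delta'\le\alpha$ so that on $(-\delta',\delta')$ neither $\underline{g}$ nor $\overline{g}$ decreases strictly past its value at $x'$ in a way that would make $\textbf{F}(x'+\lambda d)\prec\textbf{F}(x')$; concretely, choosing $\alpha$ near a minimizer of, say, $\overline{g}$ (or any point where the one-sided derivatives bracket zero) forces $\textbf{F}(x'+\lambda d)\nprec\textbf{F}(x')$ for $\lambda$ in a neighborhood of $0$. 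The main obstacle is this last step: one must handle the case where the restricted functions have no finite minimizer (the ray is a direction of recession for $\textbf{F}$) separately from the case where a minimizer exists, and argue that the convexity of $\underline{g},\overline{g}$ together with their behavior at $0$ always permits a valid choice of $x'$ and $\delta'$ — a careful but elementary one-variable convexity argument. Everything else is bookkeeping with Taylor expansions and the explicit form of $\mathcal{W}$.
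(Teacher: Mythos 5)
You should first note that the paper does not prove Theorem \ref{tsgmed} at all: it is quoted from \cite{Ghosh2019gradient}, so there is no in-paper argument to compare against, and your sketch has to stand on its own. It does not, for two concrete reasons. For condition (\ref{ced1}), your claim that both $\nabla\underline{f}(\bar{x})^{T}d$ and $\nabla\overline{f}(\bar{x})^{T}d$ are nonpositive does not follow from the hypotheses. The assumption $0\notin D_i\textbf{F}(\bar{x})$ is made for \emph{one} index $i$ only; for any other index $j$ whose $gH$-partial derivative contains $0$ (or for which $\partial_j\underline{f}(\bar{x})>\partial_j\overline{f}(\bar{x})$), the term $-(w\underline{c}_j+w'\overline{c}_j)\,\partial_j\underline{f}(\bar{x})$ can be strictly positive and arbitrarily large: with $\partial_j\underline{f}(\bar{x})=10$, $\partial_j\overline{f}(\bar{x})=-1$ and $w=1$ it contributes $+10$ to $\nabla\underline{f}(\bar{x})^{T}d$. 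So the componentwise sign argument controls only the coordinate $i$, and your ``short computation'' establishing descent for both endpoint functions (or even one of them) is unproven. Whatever saves the statement must come from the particular structure imposed by Definition \ref{dghd}, which is strictly stronger than differentiability of $\underline{f}$ and $\overline{f}$; relatedly, your identification $\{\underline{c}_i,\overline{c}_i\}=\{\partial_i\underline{f}(\bar{x}),\partial_i\overline{f}(\bar{x})\}$ and the Taylor expansions presuppose that $\underline{f},\overline{f}$ are differentiable at $\bar{x}$, which is asserted, not derived, from Definition \ref{dghd} (in the classical $gH$-derivative theory the endpoint functions need not be differentiable).

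The second and larger gap is condition (\ref{ced2}), which is where the actual content of Definition \ref{ded} lies. You import convexity of $\textbf{F}$ (via Lemma \ref{lc1}), which is not a hypothesis of Theorem \ref{tsgmed}, and even granting it, the case you explicitly defer --- when neither $\underline{g}(\lambda)=\underline{f}(\bar{x}+\lambda d)$ nor $\overline{g}(\lambda)=\overline{f}(\bar{x}+\lambda d)$ attains a minimum on the ray --- is exactly the case in which no admissible $x'$ need exist: if both endpoint functions decrease strictly along the whole ray, then every candidate $x'=\bar{x}+\alpha d$ satisfies $\textbf{F}(x'+\lambda d)\prec\textbf{F}(x')$ for small $\lambda>0$, so no choice of $\alpha$ and $\delta'$ works, and some additional ingredient (boundedness of $\mathcal{X}$, existence of efficient points along the line, or extra structure from \cite{Ghosh2019gradient}) is indispensable. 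Even in the case of a finite minimizer your recipe is not enough: at a minimizer of $\overline{g}$ alone, $\overline{g}$ may be locally constant while $\underline{g}$ still decreases strictly, which again gives $\textbf{F}(x'+\lambda d)\prec\textbf{F}(x')$; what you need is a point of $(0,\delta)$ that is locally efficient for the pair $(\underline{g},\overline{g})$ simultaneously, and producing such a point is precisely the step your sketch leaves open.
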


Based on Theorem \ref{tsgmed}, we develop the method to find the efficient solutions to the IOP (\ref{IOP}), where the objective function $\textbf{F}$ may not be $gH$-differentiable at each point in the feasible region. The searching direction at each point $\bar{x}\in \mathcal{X}$ of the proposed method is $-\mathcal{W}(\widehat{\textbf{G}})$, where $\widehat{\textbf{G}}\in \partial \textbf{F}(\bar{x})$. Since, in the proposed method, to generate a direction we use  $gH$-subgradient of the objective function $\textbf{F}$, we name the method as \emph{$gH$-subgradient method}. Similar to the subgradient method for conventional optimization problems \cite{Bertsekas1999,Boyd2004}  the proposed $gH$-subgradient method has no stoping criteria. In the $gH$-subgradient method, we consider the diminishing step length $\alpha_k$ at each iteration $k$, i.e,
\[
 \alpha_{k}>0,~~\lim_{k\rightarrow\infty}\alpha_k=0~~\text{and}~~\sum_{k=1}^\infty\alpha_k=\infty.
\]
The algorithmic implementation of the proposed $gH$-subgradient method is presented in Algorithm \ref{algosg}. It is noteworthy that for the degenerate case of the IVF $\textbf{F}$, i.e., for $\underline{f}(x)=\overline{f}(x)=f(x)$ for all $x\in\mathcal{X}$, the Algorithm \ref{algosg} reduces to the conventional subgradient method with diminishing step size.
\begin{algorithm}[H]
    \caption{$gH$-subgradient method for IOPs}\label{algosg}
    \begin{algorithmic}[1]
      \REQUIRE Given the initial point $x_{0}$, the IVF $\textbf{F}:\mathcal{X}(\subseteq\mathbb{R}^n)\rightarrow I(\mathbb{R})$, the maximum number of iterations $m$, and $w, w' \in [0, 1]$ such that $w+w'=1$.
      \STATE Set $k = 1$, $\mathscr{E}_s=\{x_k\}$, and $\mathscr{N}_s=\left\{\textbf{F}(x_k)\right\}$.
      \STATE \textbf{while} $k\leq m$
      \STATE ~~~~~~~~~~Find a $\widehat{\textbf{G}}_{k}\in\partial \textbf{F}(x_k)$ such that $\mathcal{W}(\widehat{\textbf{G}}_{k})\neq 0$ and a sequence $\{\alpha_{k}\}$ such that
            \[
            ~~~~~~~~~~\alpha_{k}>0,~~\lim_{k\rightarrow\infty}\alpha_k=0~~\text{and}~~\sum_{k=1}^\infty\alpha_k=\infty.
            \]
      \STATE ~~~~~~~~~~Calculate
             \[
             ~~~~~~~~~~x_{k+1} = x_{k} - \alpha_{k}\mathcal{W}(\widehat{\textbf{G}}_{k}).
             \]
      \STATE ~~~~~~~~~~Set $\widetilde{\mathscr{E}}_s=\left\{x\in\mathscr{E}_s\mid \textbf{F}(x_{k+1})\prec\textbf{F}(x)\right\}$ and $\widetilde{\mathscr{N}}_s=\left\{\textbf{A}\in\mathscr{N}_s\mid \textbf{F}(x_{k+1})\preceq\textbf{A}\right\}$.
      \STATE ~~~~~~~~~~Set $\mathscr{E}_s\leftarrow \mathscr{E}_s\setminus\widetilde{\mathscr{E}}_s$.
      \STATE ~~~~~~~~~~Set $\mathscr{N}_s\leftarrow \mathscr{N}_s\setminus\widetilde{\mathscr{N}}_s$.
      \STATE ~~~~~~~~~~\textbf{if} $\textbf{F}(x)\nprec \textbf{F}(x_{k+1})$ for all $x\in\mathscr{E}_s$,
      \STATE ~~~~~~~~~~~~~~~~~~~~$\mathscr{E}_s\leftarrow\mathscr{E}_s\cup\left\{x_{k+1}\right\}$
      \STATE ~~~~~~~~~~\textbf{end if}
      \STATE ~~~~~~~~~~\textbf{if} $\textbf{A}\npreceq \textbf{F}(x_{k+1})$ for all $\textbf{A}\in\mathscr{N}_s$,
      \STATE ~~~~~~~~~~~~~~~~~~~~$\mathscr{N}_s\leftarrow\mathscr{N}_s\cup\left\{\textbf{F}(x_{k+1})\right\}$
      \STATE ~~~~~~~~~~\textbf{end if}
      \STATE Set $k \leftarrow k + 1$.
      \STATE \textbf{end while}
      \STATE \textbf{Return} $\mathscr{E}_s$: the set of efficient solutions and $\mathscr{N}_s$: the set of non-dominated solutions.
    \end{algorithmic}
  \end{algorithm}
\begin{note}
It is to be mentioned that although we named the set $\mathscr{E}_s$ of Algorithm \ref{algosg} as the efficient solution set of the IOP (\ref{IOP}), the elements of the set $\mathscr{E}_s$ may not be the actual efficient solutions to the IOP (\ref{IOP}). But similar to the subgradient method for the conventional optimization problems, the elements of the set $\mathscr{E}_s$ will be the best elements among all the points $x_k$ of $\mathcal{X}$ generated by Algorithm \ref{algosg}. Similarly, the elements of the set $\mathscr{N}_s$ will be the best elements among all the intervals $\textbf{F}(x_k)$ of $\textbf{F}(\mathcal{X})$ generated by Algorithm \ref{algosg}.
\end{note}
\begin{example}\label{exsgm}
Let us consider the following IOP:
\begin{equation}\label{exsgmiop}
\min_{x\in \mathcal{X}} \textbf{F}(x)=
\begin{cases}
[3, 7] \ominus_{gH} [-1, 0]\odot \lvert x \rvert, & \text{if } -1 ~\leq~ x ~\leq~ 1 \\
[3, 5] \oplus [1, 2]\odot \lvert x \rvert,  & \text{otherwise}
\end{cases}\end{equation}
where $\mathcal{X}=[-3, 3]$.\\
The graph of the IVF $\textbf{F}$ is depicted by the gray shaded region in Figure \ref{fexsgm}. From Figure \ref{fexsgm}, it is clear that there does not exist any $x(\neq 0) \in \mathcal{X}$ such that $\textbf{F}(x)\prec\textbf{F}(0)$, however $\textbf{F}(0)\prec\textbf{F}(x)$ for all $x(\neq 0) \in \mathcal{X}$. Hence, $x=0$ is the only efficient solution of the IOP (\ref{exsgmiop}).
\begin{figure}[H]
\begin{center}
\includegraphics[scale=0.8]{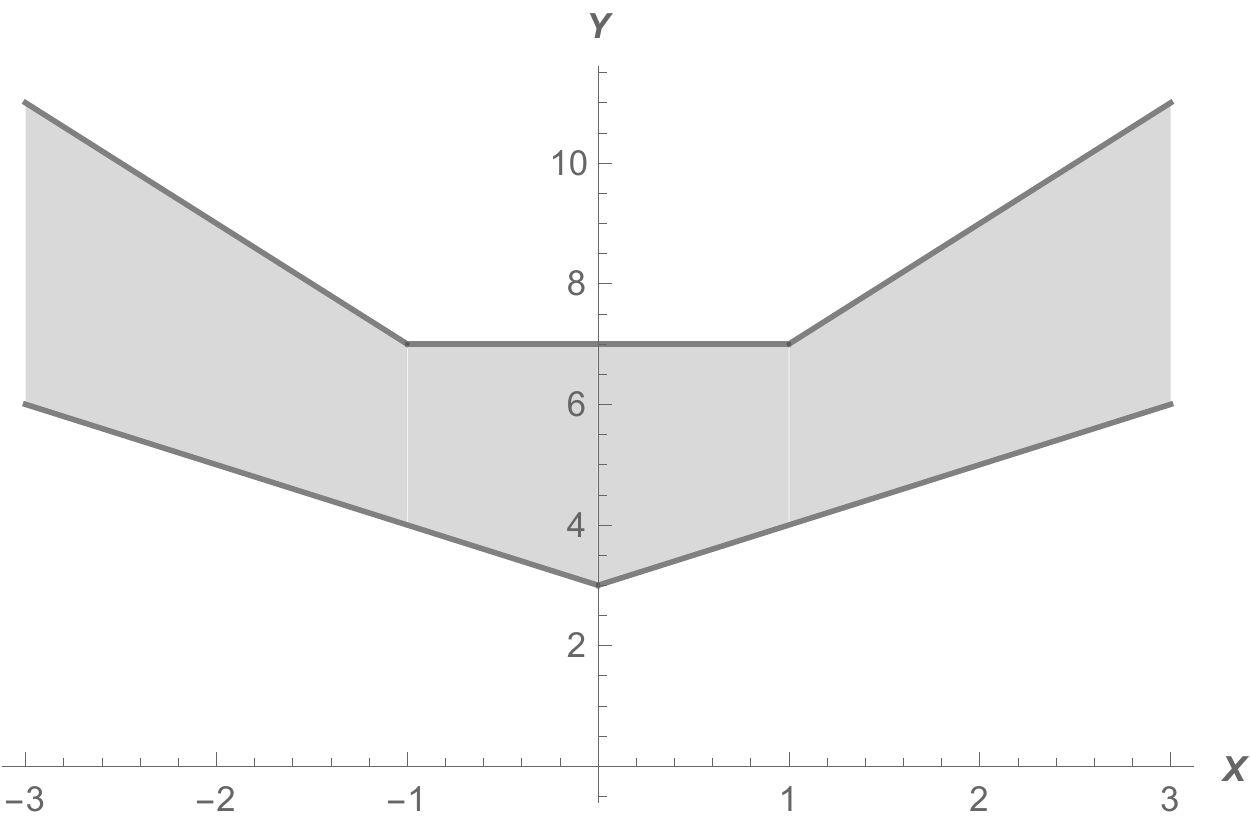}
    \caption{Interval-valued function $\textbf{F}$ of the IOP (\ref{exsgmiop}) is illustrated by gray shaded region.}\label{fexsgm}
\end{center}
\end{figure}
At $\bar{x}=-1$,
\begingroup\allowdisplaybreaks\begin{align*}
\left(\textbf{F}'(\bar{x})\right)_+:=~&\displaystyle\lim_{d\rightarrow 0+} \frac{1}{d}\odot \left(\textbf{F}(\bar{x}+d) \ominus_{gH} \textbf{F}(\bar{x})\right)\\
=~&\displaystyle\lim_{d\rightarrow 0+} \frac{1}{d}\odot \left(\textbf{F}(d-1) \ominus_{gH} \textbf{F}(-1)\right)\\
=~&\displaystyle\lim_{d\rightarrow 0+} \frac{1}{d}\odot \big(\left([3, 7] \ominus_{gH} [-1, 0]\odot \lvert d-1 \rvert\right)\ominus_{gH} [4, 7]\big)\\
=~&\displaystyle\lim_{d\rightarrow 0+} \frac{1}{d}\odot \left([4-d, 7]\ominus_{gH} [4, 7]\right)\\
=~&\displaystyle\lim_{d\rightarrow 0+} \frac{1}{d}\odot [-d, 0]\\
=~&[-1, 0],
\end{align*}\endgroup
and
\begingroup\allowdisplaybreaks\begin{align*}
\left(\textbf{F}'(\bar{x})\right)_-:=~&\displaystyle\lim_{d\rightarrow 0-} \frac{1}{d}\odot \left(\textbf{F}(\bar{x}+d) \ominus_{gH} \textbf{F}(\bar{x})\right)\\
=~&\displaystyle\lim_{d\rightarrow 0-} \frac{1}{d}\odot \left(\textbf{F}(d-1) \ominus_{gH} \textbf{F}(-1)\right)\\
=~&\displaystyle\lim_{d\rightarrow 0-} \frac{1}{d}\odot \big(\left([3, 5] \oplus [1, 2]\odot \lvert d-1 \rvert\right)\ominus_{gH} [4, 7]\big)\\
=~&\displaystyle\lim_{d\rightarrow 0-} \frac{1}{d}\odot \left([4-d, 7-2d]\ominus_{gH} [4, 7]\right)\\
=~&\displaystyle\lim_{d\rightarrow 0-} \frac{1}{d}\odot [-d, -2d]\\
=~&[-2, -1]\\
\neq~&\left(\textbf{F}'(\bar{x})\right)_+.
\end{align*}\endgroup
Therefore, at the point $x=-1$, $\textbf{F}$ is not $gH$-differentiable. Similarly it can be proved that $\textbf{F}$ is not $gH$-differentiable at the points $x=0$ and $x=1$.\\

\noindent Next, we apply Algorithm \ref{algosg} on the IOP (\ref{exsgmiop}) and try to capture the efficient solutions. Although we can start the iterations from any point of $\mathcal{X}$, to analyze the $gH$-subgradient method properly, we start the iterations from $x=-1$ because $\textbf{F}$ is not $gH$-differentiable at $x=-1$.\\

\noindent Let us consider $w=\tfrac{2}{3}$, $w'=\tfrac{1}{3}$ and the step length $\alpha_k = \tfrac{1}{k}$ at each iteration $k$. Therefore,
\[
x_1=-1,~~\alpha_1=1,~~\mathscr{E}_s=\{-1\}~~\text{and}~~\mathscr{N}_s=\{[4, 7]\}.
\]
Let $\textbf{G}_1\in \partial\textbf{F}(x_1)$. Hence, for all $d\in[0, 2]$ we have
\begingroup\allowdisplaybreaks\begin{align*}
\textbf{G}_1\odot d\preceq~&\textbf{F}(d-1)\ominus_{gH} \textbf{F}(-1)\\
=~&\left([3, 7] \ominus_{gH} [-1, 0]\odot \lvert d-1 \rvert\right)\ominus_{gH} [4, 7]\\
=~&\begin{cases}
[-d, 0] & \text{for}~~0\leq d\leq 1\\
[d-2, 0] & \text{for}~~1\leq d\leq 2.\\
\end{cases}
\end{align*}\endgroup
Thus,
\begin{equation}\label{esss1}
\underline{g}_1\leq -1~~\text{and}~~\overline{g}_1\leq 0.
\end{equation}
Further, for all $d\in [-2, 0]\cup[2, 4]$, we have
\begingroup\allowdisplaybreaks\begin{align*}
\textbf{G}_1\odot d\preceq~&\textbf{F}(d-1)\ominus_{gH} \textbf{F}(-1)\\
=~&\left([3, 5] \oplus [1, 2]\odot \lvert d-1 \rvert\right) \ominus_{gH} [4, 7]\\
=~&\begin{cases}
[-d, -2d] & \text{for}~~-2\leq d\leq 0\\
[d-2, 2d-4] & \text{for}~~2\leq d\leq 4.\\
\end{cases}
\end{align*}\endgroup
Therefore,
\begin{equation}\label{esss2}
-2\leq\underline{g}_1\leq 0~~\text{and}~~-1\leq\overline{g}_1\leq 0.
\end{equation}
By the inequalities (\ref{esss1}) and (\ref{esss2}), we obtain
\[
[-2, -1]\preceq\textbf{G}_1\preceq [-1, 0].
\]
We choose $\textbf{G}_1=\left[-\tfrac{3}{2}, -\tfrac{1}{2}\right]$. Hence, $\mathcal{W}(\textbf{G}_1)=-\tfrac{7}{6}$. Therefore,
\[
x_2=x_1-\alpha_1\mathcal{W}(\textbf{G}_1)=-1+\tfrac{7}{6}=\tfrac{1}{6}.
\]
Since $\textbf{F}(\tfrac{1}{6})=\left[\tfrac{19}{6}, 7\right]\prec[4, 7]=\textbf{F}(-1)$,
\[
\widetilde{\mathscr{E}}_s=\{-1\}~~\text{and}~~\widetilde{\mathscr{N}}_s=\{[4, 7]\}.
\]
Therefore, we delete $-1$ and $[4, 7]$ from the sets $\mathscr{E}_s$ and $\mathscr{N}_s$, respectively and both the sets $\mathscr{E}_s$ and $\mathscr{N}_s$ are empty now. So, according to Algorithm \ref{algosg}, there does not exist any $x\in\mathscr{E}_s$ such that $\textbf{F}(x)\prec\textbf{F}(\tfrac{1}{6})$ and hence, the new
\[
\mathscr{E}_s=\left\{\tfrac{1}{6}\right\}~~\text{and}~~\mathscr{N}_s=\left\{\textbf{F}(\tfrac{1}{6})\right\}=\left\{\left[\tfrac{19}{6}, 7\right]\right\}.
\]
Further, it can be easily checked that $\textbf{F}$ is $gH$-differentiable at $x=\tfrac{1}{6}$. Thus, due to Note \ref{nsg1}, we get
\[
\textbf{G}_2=\nabla\textbf{F}(\tfrac{1}{6})=[0, 1]\in \partial\textbf{F}(x_2)
\]
and $\mathcal{W}(\textbf{G}_2)=\tfrac{1}{3}$. As $\alpha_2=\tfrac{1}{2}$,
\[
x_3=x_2-\alpha_2\mathcal{W}(\textbf{G}_2)=\tfrac{1}{6}-\tfrac{1}{2}\times\tfrac{1}{3}=0.
\]
Since $\textbf{F}(0)=[3, 7]\prec\left[\tfrac{19}{6}, 7\right]=\textbf{F}(\tfrac{1}{6})$,
\[
\widetilde{\mathscr{E}}_s=\left\{\tfrac{1}{6}\right\}~~\text{and}~~\widetilde{\mathscr{N}}_s=\left\{\left[\tfrac{19}{6}, 7\right]\right\}.
\]
Hence, due to Algorithm \ref{algosg}, we delete $-1$ and $[4, 7]$ from the sets $\mathscr{E}_s$ and $\mathscr{N}_s$, respectively and both the sets $\mathscr{E}_s$ and $\mathscr{N}_s$ become empty. Therefore, there does not exist any $x\in\mathscr{E}_s$ such that $\textbf{F}(x)\prec\textbf{F}(0)$ and hence, the new
\[
\mathscr{E}_s=\left\{0\right\}~~\text{and}~~\mathscr{N}_s=\left\{\textbf{F}(0)\right\}=\left\{\left[3, 7\right]\right\}.
\]
As $x=0$ is the only efficient solution of the IOP (\ref{exsgmiop}), if we perform more iterations, none of the generated $\textbf{F}(x_k)$ can dominate $\textbf{F}(0)$. Therefore, neither any $x_k$ can take entry in $\mathscr{E}_s=\{0\}$ nor any $\textbf{F}(x_k)$ can take entry in $\mathscr{N}_s=\{[3, 7]\}$ for $k\geq2$.
\end{example}

\begin{thm}[Convergence of $gH$-subgardient method]\label{tcsm}
Let $\textbf{F}:\mathcal{X}\rightarrow I(\mathbb{R})$ be a convex IVF, where $\mathcal{X}\subset\mathbb{R}^n$, and let the nonempty set $\Omega$ be the set of efficient solution of $\textbf{F}$. Given $x_1\in \mathcal{X}$, the sequence $\{x_k\}$ is generated by Algorithm \ref{algosg}, either $\{x_k\}\in \Omega$ for all $k$ or there exists a subsequence $\{x_{k_n}\}$ of $\{x_k\}$ such that $\textbf{F}(x_{k_{n+1}})\preceq \textbf{F}(x_{k_n})$ and $\lim\limits_{n\to \infty}\textbf{F}(x_{k_n})=\textbf{F}(x^*),$ where $x^*\in \Omega$.
\end{thm}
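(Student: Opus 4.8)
The plan is to transplant the convergence analysis of the classical subgradient method with diminishing step size (see \cite{Bertsekas1999,Boyd2004}) to the interval setting, using the scalarization map $\mathcal{W}$ of (\ref{wmap}) and the partial order $\preceq$ as the bridge. First I would settle the trivial branch of the dichotomy: if $x_k\in\Omega$ for every $k$, the first alternative holds and there is nothing more to do. So suppose some iterate is not efficient and fix an arbitrary $x^*\in\Omega$ (possible since $\Omega\neq\emptyset$). It is convenient to record at the outset a $gH$-subgradient analogue of Theorem \ref{tsgmed}: whenever $\widehat{\textbf{G}}_k\in\partial\textbf{F}(x_k)$ with $\mathcal{W}(\widehat{\textbf{G}}_k)\neq 0$, the direction $-\mathcal{W}(\widehat{\textbf{G}}_k)$ is an efficient-direction of $\textbf{F}$ at $x_k$, so each step of Algorithm \ref{algosg} is locally $\preceq$-improving; this is what will later let me thin the iterates to a $\preceq$-monotone subsequence.

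The heart of the argument is a Fej\'er-type estimate for $\|x_k-x^*\|$. From the update $x_{k+1}=x_k-\alpha_k\mathcal{W}(\widehat{\textbf{G}}_k)$ one has the identity
\[
\|x_{k+1}-x^*\|^2=\|x_k-x^*\|^2-2\alpha_k\langle\mathcal{W}(\widehat{\textbf{G}}_k),\,x_k-x^*\rangle+\alpha_k^2\,\|\mathcal{W}(\widehat{\textbf{G}}_k)\|^2 .
\]
The key step is to show $\langle\mathcal{W}(\widehat{\textbf{G}}_k),\,x_k-x^*\rangle\geq\delta_k\geq 0$, where $\delta_k$ is a ``gap'' built from $\underline f(x_k)-\underline f(x^*)$ and $\overline f(x_k)-\overline f(x^*)$ and arranged so that $\delta_k\to 0$ forces $\textbf{F}(x_k)\to\textbf{F}(x^*)$ in $\|\cdot\|_{I(\mathbb{R})}$. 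To get it, I would write the $gH$-subgradient inequality (\ref{esg}) at $x=x^*$, namely $(x^*-x_k)^T\odot\widehat{\textbf{G}}_k\preceq\textbf{F}(x^*)\ominus_{gH}\textbf{F}(x_k)$, and apply $\mathcal{W}$; since $w,w'\geq 0$, $\mathcal{W}$ is isotone for $\preceq$, so the inequality transfers to the reals, and it then remains to compare $\mathcal{W}\!\left((x^*-x_k)^T\odot\widehat{\textbf{G}}_k\right)$ with $\langle\mathcal{W}(\widehat{\textbf{G}}_k),\,x^*-x_k\rangle$ and $\mathcal{W}\!\left(\textbf{F}(x^*)\ominus_{gH}\textbf{F}(x_k)\right)$ with $w(\underline f(x^*)-\underline f(x_k))+w'(\overline f(x^*)-\overline f(x_k))$, tracking the signs of the components of $x^*-x_k$.

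With the gap in hand, I would bound $\|\mathcal{W}(\widehat{\textbf{G}}_k)\|$: $\textbf{F}$ convex gives $\underline f,\overline f$ convex by Lemma \ref{lc1}, so their subdifferentials --- hence the endpoints of the $gH$-subgradients and therefore $\mathcal{W}(\widehat{\textbf{G}}_k)$ --- are uniformly bounded, say by $M$, along the iterates. Substituting gives $\|x_{k+1}-x^*\|^2\leq\|x_k-x^*\|^2-2\alpha_k\delta_k+\alpha_k^2M^2$, and the standard reasoning applies: if $\delta_k\geq\varepsilon>0$ for all large $k$, then for $k$ large enough that $\alpha_kM^2<\varepsilon$ we get $\|x_{k+1}-x^*\|^2\leq\|x_k-x^*\|^2-\alpha_k\varepsilon$, and summing, together with $\sum_k\alpha_k=\infty$, drives $\|x_k-x^*\|^2$ to $-\infty$ --- impossible. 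Hence $\liminf_k\delta_k=0$, so some subsequence satisfies $\textbf{F}(x_{k_n})\to\textbf{F}(x^*)$. Finally, using the efficient-direction remark of the first paragraph together with the bookkeeping of $\mathscr{N}_s$ in Algorithm \ref{algosg} (which only ever discards dominated intervals), I would pass to a further subsequence along which $\textbf{F}(x_{k_{n+1}})\preceq\textbf{F}(x_{k_n})$, which is exactly the second alternative with this $x^*\in\Omega$.

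The step I expect to be the real obstacle is making the gap estimate rigorous. The order $\preceq$ is only partial and efficient solutions are highly non-unique, so $\textbf{F}(x_k)$ and $\textbf{F}(x^*)$ may be incomparable; consequently the real surrogate $w\underline f+w'\overline f$ suggested by $\mathcal{W}$ need not have $\mathcal{W}(\widehat{\textbf{G}}_k)$ as one of its subgradients, and the comparison above picks up error terms whenever an endpoint of $(x^*-x_k)^T\odot\widehat{\textbf{G}}_k$ ``flips'' because some component of $x^*-x_k$ is negative. Verifying that, after these corrections, $\delta_k$ stays nonnegative and vanishes only in the limit at an efficient value --- and that this can be done under $\alpha_k\to 0$, $\sum_k\alpha_k=\infty$ alone, without assuming $\sum_k\alpha_k^2<\infty$ --- is the delicate part; the levers are the isotonicity of $\mathcal{W}$, the convexity of $\underline f,\overline f$, the efficient-direction property of $-\mathcal{W}(\widehat{\textbf{G}}_k)$, and the records $\mathscr{E}_s,\mathscr{N}_s$.
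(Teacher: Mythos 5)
Your plan hinges on the ``gap estimate'' $\langle\mathcal{W}(\widehat{\textbf{G}}_k),\,x_k-x^*\rangle\geq\delta_k\geq 0$, and this is precisely the step that is missing and that, as set up, does not go through. Writing (\ref{esg}) at $x=x^*$ and applying $\mathcal{W}$ does give a real inequality, but the scalarization of $(x^*-x_k)^T\odot\widehat{\textbf{G}}_k$ is not $\langle\mathcal{W}(\widehat{\textbf{G}}_k),\,x^*-x_k\rangle$: on every component with $x^*_i<(x_k)_i$ the product $(x^*_i-(x_k)_i)\odot\textbf{G}_{k,i}$ swaps endpoints, so the weights $w,w'$ are interchanged there, producing error terms of the form $(x^*_i-(x_k)_i)(w-w')(\overline{g}_{k,i}-\underline{g}_{k,i})$ whose sign is favourable only for one ordering of $w,w'$; similarly, the quantity $w\min\{\cdot\}+w'\max\{\cdot\}$ arising from the $gH$-difference on the right-hand side lies below $w(\underline{f}(x^*)-\underline{f}(x_k))+w'(\overline{f}(x^*)-\overline{f}(x_k))$ only when $w\geq w'$. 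Hence $\mathcal{W}(\widehat{\textbf{G}}_k)$ need not be a subgradient of the scalarized function $w\underline{f}+w'\overline{f}$ at $x_k$, and even if it were, $\delta_k\geq 0$ would require the fixed $x^*\in\Omega$ to \emph{minimize} that scalarization --- which an arbitrary efficient point does not; you would have to choose $x^*$ as a minimizer of $w\underline{f}+w'\overline{f}$ and prove such a minimizer exists, an assumption not present in the theorem. Two further steps are asserted rather than proved: the uniform bound $M$ on $\lVert\mathcal{W}(\widehat{\textbf{G}}_k)\rVert$ along the iterates (this needs Lipschitz continuity of $\underline{f},\overline{f}$ or confinement of the iterates to a compact set), and the final extraction of a $\preceq$-monotone subsequence from $\lVert\textbf{F}(x_{k_n})\ominus_{gH}\textbf{F}(x^*)\rVert_{I(\mathbb{R})}\to 0$, which does not follow because the intervals $\textbf{F}(x_{k_n})$ can approach $\textbf{F}(x^*)$ while remaining pairwise incomparable.

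For comparison, the paper takes an entirely different and much shorter route: it treats the dichotomy as essentially definitional (if no $\preceq$-monotone subsequence exists, the iterates are placed in $\Omega$), and, in the other branch, takes the monotone subsequence as given and argues that the quantities $\lVert\textbf{F}(x_{k_n})\ominus_{gH}\textbf{F}(x^*)\rVert_{I(\mathbb{R})}$ shrink by factors $\epsilon_i\in(0,1)$ and hence geometrically to $0$; no Fej\'er-type estimate, no use of the step-size conditions, and no bound on the subgradients appears there. So your proposal is a genuinely different, more quantitative strategy in the spirit of the classical diminishing-step analysis, but in its current form the central inequality is absent and the concluding monotone-subsequence step is unjustified, so it does not yet constitute a proof.
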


\begin{proof}
For the given sequence $\{x_k\}$, let there does not exist any subsequence $\{x_{k_n}\}$ of $\{x_k\}$ such that $\textbf{F}(x_{k_{n+1}})\preceq \textbf{F}(x_{k_n})$, then $\{x_k\}\in \Omega$.\\

\noindent
Next, we suppose that $x_k \notin \Omega$ for all $k$. Then, there exists a subsequence $\{x_{k_n}\}$ of $\{x_k\}$ such that \[\textbf{F}(x_{k_{n+1}})\preceq \textbf{F}(x_{k_n}).\]
\noindent
This implies that \[\textbf{F}(x_{k_1})\succeq \textbf{F}(x_{k_2})\succeq \textbf{F}(x_{k_3})\succeq\cdots\succeq  \textbf{F}(x^*).\]
Since
\begingroup\allowdisplaybreaks\begin{align}\label{hhhh}
&\lVert \textbf{F}(x_{k_n}) \ominus_{gH} \textbf{F}(x^*) \rVert_{I(\mathbb{R})}\leq  \lVert \textbf{F}(x_{k_{n-1}}) \ominus_{gH} \textbf{F}(x^*) \rVert_{I(\mathbb{R})} \nonumber\\
\Longrightarrow~& \lVert \textbf{F}(x_{k_n}) \ominus_{gH} \textbf{F}(x^*) \rVert_{I(\mathbb{R})}=\epsilon_1  \lVert \textbf{F}(x_{k_{n-1}}) \ominus_{gH} \textbf{F}(x^*) \rVert_{I(\mathbb{R})},~\text{where}~\epsilon_1 \in (0, 1),
\end{align}\endgroup

and
\begingroup\allowdisplaybreaks\begin{align}\label{kkkk}
&\lVert \textbf{F}(x_{k_{n-1}}) \ominus_{gH} \textbf{F}(x^*) \rVert_{I(\mathbb{R})}\leq  \lVert \textbf{F}(x_{k_{n-2}}) \ominus_{gH} \textbf{F}(x^*) \rVert_{I(\mathbb{R})} \nonumber\\
\Longrightarrow~& \lVert \textbf{F}(x_{k_{n-1}}) \ominus_{gH} \textbf{F}(x^*) \rVert_{I(\mathbb{R})}=\epsilon_2  \lVert \textbf{F}(x_{k_{n-2}}) \ominus_{gH} \textbf{F}(x^*) \rVert_{I(\mathbb{R})},~\text{where}~\epsilon_2 \in (0, 1),
\end{align}\endgroup
 we have
\begingroup\allowdisplaybreaks\begin{align}\label{k}
&\lVert \textbf{F}(x_{k_{n}}) \ominus_{gH} \textbf{F}(x^*) \rVert_{I(\mathbb{R})}= \epsilon_1 \epsilon_2 \lVert \textbf{F}(x_{k_{n-2}}) \ominus_{gH} \textbf{F}(x^*) \rVert_{I(\mathbb{R})} \nonumber\\
\Longrightarrow~& \lVert \textbf{F}(x_{k_{n-1}}) \ominus_{gH} \textbf{F}(x^*) \rVert_{I(\mathbb{R})}\leq \epsilon^2  \lVert \textbf{F}(x_{k_{n-2}}) \ominus_{gH} \textbf{F}(x^*) \rVert_{I(\mathbb{R})},~\text{where}~\epsilon =\max\{\epsilon_1, \epsilon_2\}.
\end{align}\endgroup
By the same process, we obtain
\[\lVert \textbf{F}(x_{k_{n}}) \ominus_{gH} \textbf{F}(x^*) \rVert_{I(\mathbb{R})}= \epsilon^{n-1} \lVert \textbf{F}(x_{k_{1}}) \ominus_{gH} \textbf{F}(x^*) \rVert_{I(\mathbb{R})}.\]
\noindent
Since $\lVert \textbf{F}(x_{k_{1}}) \ominus_{gH} \textbf{F}(x^*) \rVert_{I(\mathbb{R})}$ is finite, taking limit $n\to \infty$, we have
\begingroup\allowdisplaybreaks\begin{align*}
&\lVert \textbf{F}(x_{k_{n}}) \ominus_{gH} \textbf{F}(x^*) \rVert_{I(\mathbb{R})}\to 0\\
\Longrightarrow~& \lim\limits_{n\to \infty}\textbf{F}(x_{k_n})=\textbf{F}(x^*).
\end{align*}\endgroup
\end{proof}

\section{Application}\label{sa}

\noindent  In this section, we apply the proposed $gH$-subgradient method in solving the $\ell_1$ penalized linear regression problems, which is known as \emph{lasso problems} for interval-valued data.\\

Suppose a set of  $n$ pairs of data $(\textbf{X}_k, \textbf{Y}_k)$ is given, where $\textbf{Y}_k\in I(\mathbb{R})$ is the corresponding interval-valued output of interval-valued features $\textbf{X}_k\in I(\mathbb{R})^l$ for all $k\in\{1, 2, \ldots, n\}$. Our aim is fitting a function $\mathcal{H}\left(\cdot ;\beta\right):I(\mathbb{R})^l\to I(\mathbb{R})$, defined by
\[
\mathcal{H}\left(\textbf{X};\beta\right)=\bigoplus_{i=1}^l\beta_i\odot\textbf{X}^i,
\]
where $\beta\in \mathbb{R}^l$ is a parameter vector such that $\mathcal{H}\left(\textbf{X}_k;\hat{\beta}\right)$ will be one of the best approximations of $\textbf{Y}_k$ for all $k\in\{1, 2, \ldots, n\}$. By `one of the best approximations' we mean that $\mathcal{H}\left(\textbf{X} ;\beta\right)$ gives a nondominated error. Evidently, if $\hat{\beta}$ is an efficient solution of the following IOP:
\begin{equation}\label{eef}
\min_{\beta\in\mathbb{R}^l} \boldsymbol{\mathcal{E}}  (\beta)=\boldsymbol{\mathcal{E}}_1(\beta)\oplus\boldsymbol{\mathcal{E}}_2(\beta),
\end{equation}
where
\[
\boldsymbol{\mathcal{E}}_1(\beta)=\frac{1}{2}\bigoplus_{k=1}^n\big(\left(\mathcal{H}\left(\textbf{X}_k;\beta\right)\ominus_{gH}\textbf{Y}_k\right)\odot_s\left(\mathcal{H}\left(\textbf{X}_k;\beta\right)\ominus_{gH}\textbf{Y}_k\right)\big)
\]
and
\[
\boldsymbol{\mathcal{E}}_2(\beta) = \textbf{L} \odot \left(\lvert\beta_1\rvert+\lvert\beta_2\rvert+\ldots+\lvert\beta_l\rvert\right);
\]
$\textbf{L}\in I(\mathbb{R}_+)$ is interval-valued tuning parameter, then $\mathcal{H}\left(\textbf{X};\hat{\beta}\right)$ can be considered as an efficient choice of the approximating function $\mathcal{H}\left(\textbf{X};\beta\right)$.\\

\noindent It is observable that the functions $\mathcal{H}\left(\textbf{X}_k; \cdot\right)$ and $\boldsymbol{\mathcal{E}}_2$ are convex IVFs from $\mathbb{R}^l$ to $I(\mathbb{R})$. Since, to construct the function $\boldsymbol{\mathcal{E}}_1$ we have used the special product $\odot_s$ instead of $\odot$, the function $\boldsymbol{\mathcal{E}}_1$ is always a convex IVF from $\mathbb{R}^l$ to $I(\mathbb{R})$ for all $\textbf{X}_k$ and $\textbf{Y}_k$. Thus, the error function $\textbf{E}$ is a convex IVF from $\mathbb{R}^l$ to $I(\mathbb{R})$ for all $\textbf{X}_k$ and $\textbf{Y}_k$. \\
%
%
%

\noindent
We note that
\begingroup\allowdisplaybreaks\begin{align*}
\partial\boldsymbol{\mathcal{E}} (\beta)~=&~\partial\boldsymbol{\mathcal{E}}_1(\beta)\oplus\partial\boldsymbol{\mathcal{E}}_2(\beta)\\
=&~\left\{\nabla\boldsymbol{\mathcal{E}}_1(\beta)\right\}\oplus\partial\boldsymbol{\mathcal{E}}_2(\beta)\\
=&~\left\{\nabla\boldsymbol{\mathcal{E}}_1(\beta)\right\}\oplus\bigoplus_{i=1}^l\partial\left(\textbf{L}\odot\lvert\beta_i\rvert\right).
\end{align*}\endgroup
Therefore, one possible choice of $gH$-subgradient $\widehat{\textbf{G}}=\left(\textbf{G}_{1}, \textbf{G}_{2}, \ldots, \textbf{G}_{l}\right)\in\partial\boldsymbol{\mathcal{E}} (\beta)$ is given by
\begingroup\allowdisplaybreaks\begin{align*}
\textbf{G}_i~=&~\begin{cases}
D_i\boldsymbol{\mathcal{E}}_1(\beta)\oplus\textbf{L} & \text{if}~~ \beta_i\geq 0\\
D_i\boldsymbol{\mathcal{E}}_1(\beta)\oplus\textbf{L}\odot (-1) & \text{if}~~ \beta_i< 0\\
\end{cases}\\
~=&~\begin{cases}
\bigoplus_{k=1}^n\left(\bigoplus_{i=1}^l\beta_i\odot\textbf{X}_k^i\ominus_{gH}\textbf{Y}_k\right)\odot_s \textbf{X}_k^i\oplus\textbf{L} & \text{if}~~ \beta_i\geq 0\\
\bigoplus_{k=1}^n\left(\bigoplus_{i=1}^l\beta_i\odot\textbf{X}_k^i\ominus_{gH}\textbf{Y}_k\right)\odot_s \textbf{X}_k^i\oplus\textbf{L}\odot (-1) & \text{if}~~ \beta_i< 0\\
\end{cases}
\end{align*}\endgroup
for all $i\in\{1, 2, \ldots, l\}$.
Hence, by applying the $gH$-subgradient method on the IOP (\ref{eef}) one can obtain efficient parameter vector $\hat{\beta}$ for the function $\mathcal{H}\left(\cdot; \beta\right)$.\\

\noindent For instance, we consider the two-dimensional interval-valued features with their interval-valued output as displayed in Table \ref{tablesgmdata}. \\

\noindent
Considering the interval-valued tuning parameter $\textbf{L} = [0.03, 0.06]$ and applying Algorithm \ref{algosg} with diminishing step length $\alpha_k=\tfrac{7}{k+100000}$ at $k$-th iteration on the IOP (\ref{eef}) corresponding to the function $\mathcal{H}^2\left(\cdot ;\beta\right)$, after 10000 iterations in Matlab R2015a platform with Intel Core i5-2430M, 2:40 GHz CPU, 3 GB RAM, 32-bit Windows 7 environment, we obtain the efficient solution set as well as non-dominated solution set of IOP (\ref{eef})  as given in Table \ref{tablesolsgm} for four different values of $w$ with two different initial points.\\

\noindent The comparison of actual interval-valued outputs $\textbf{Y}_k$ and estimated interval-valued outputs $\mathcal{H}\left(\textbf{X}_k; \hat{\beta}\right)$ of the interval-valued features $\textbf{X}_k=\left(\textbf{X}^1_k, \textbf{X}^2_k\right)$ for different values of $w$ with different initial points are illustrated in Figure \ref{fsgmlasso}. The common portions of $\textbf{Y}_k$ with $\mathcal{H}\left(\textbf{X}_k;\hat{\beta}\right)$ are depicted by blue regions, where as the extended portions of $\textbf{Y}_k$ and $\mathcal{H}\left(\textbf{X}_k ;\hat{\beta}\right)$ are depicted by red and yellow regions, respectively.\\

\begin{longtable}{lll}
\caption{Data for interval-valued lasso problem}\label{tablesgmdata}\\
\toprule
$\textbf{X}^1_k$ & $\textbf{X}^2_k$ & $\textbf{Y}_k$\\
\midrule
\endhead
$[	15.88	,	16.54	]$ & $[	37.28	,	38.04	]$ & $[	398.74	,	 409.02	]$ \\
$[	16.41	,	16.85	]$ & $[	37.84	,	38.4	]$ & $[	405.9	,	 413.5	]$ \\
$[	16.87	,	17.43	]$ & $[	38.48	,	38.97	]$ & $[	413.5	,	 420.95	]$ \\
$[	16.77	,	17.29	]$ & $[	38.3	,	38.97	]$ & $[	411.48	,	 420.39	]$ \\
$[	16.35	,	17	]$ & $[	38.93	,	39.52	]$ & $[	415.47	,	424.18	 ]$ \\
$[	16.5	,	16.84	]$ & $[	39.57	,	40.32	]$ & $[	421.83	,	 430.74	]$ \\
$[	16.33	,	16.77	]$ & $[	39.66	,	40.29	]$ & $[	421.96	,	 430.19	]$ \\
$[	16.82	,	17.09	]$ & $[	39.77	,	40.2	]$ & $[	424.91	,	 430.66	]$ \\
$[	16.54	,	17.13	]$ & $[	39.96	,	40.84	]$ & $[	425.5	,	 436.58	]$ \\
$[	16.71	,	17.31	]$ & $[	40.52	,	41.22	]$ & $[	431.22	,	 440.72	]$ \\
$[	17.13	,	17.62	]$ & $[	40.35	,	41.05	]$ & $[	431.37	,	 440.43	]$ \\
$[	16.03	,	16.59	]$ & $[	41.09	,	41.5	]$ & $[	433.63	,	 440.36	]$ \\
\bottomrule
\end{longtable}
\begin{longtable}{lllll}
\caption{Output of Algorithm \ref{algosg} to find efficient solutions to IOP (\ref{eef})}\label{tablesolsgm}\\
\toprule
$w$ & $w'$ & Initial point & Efficient solution set $\mathscr{E}_s$ & Nondominated solution set $\mathscr{N}_s$\\
\midrule
\endhead
\multirow{2}{*} {$0$} & \multirow{2}{*} {$1$} & $(11, 2)$ & $\{(5.436, 8.388)\}$ &  $\{[9.719,   20.567]\}$ \\
              & & $(6, 25)$ & $\{(3.239,   9.312)\}$ & $\{[5.432,   23.528]\}$ \\
\midrule
\multirow{2}{*} {$0.3$} & \multirow{2}{*} {$0.7$} & $(11, 2)$ & $\{(5.385, 8.413)\}$ &  $\{[8.973,   19.399]\}$ \\
              & & $(6, 25)$ & $\{(3.036,   9.393)\}$ & $\{[5.734,   20.654]\}$ \\
\midrule
\multirow{2}{*} {$0.6$} & \multirow{2}{*} {$0.4$} & $(11, 2)$ & $\{(5.392, 8.414)\}$ &  $\{[8.940,  19.344]\}$ \\
              & & $(6, 25)$ & $\{(3.057,   9.403)\}$ & $\{[5.763,   19.859]\}$ \\
\midrule
\multirow{2}{*} {$1$} & \multirow{2}{*} {$0$} & $(11, 2)$ & $\{(5.494, 8.375)\}$ &  $\{[10.279,  20.810]\}$ \\
              & & $(6, 25)$ & $\{(3.313,   9.305)\}$ & $\{[6.005,   21.640]\}$ \\
\bottomrule
\end{longtable}
\begin{figure}[h]
\begin{center}
\subfigure[For $w=0$ and starting point is $(6, 25)$]
    {\includegraphics[scale=0.27]{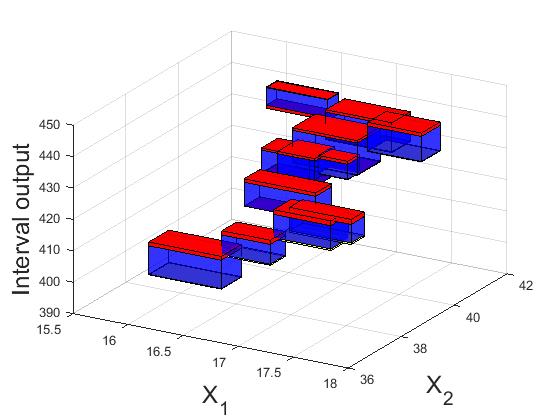}}
    \hfill
\subfigure[For $w=0$ and starting point is $(11, 2)$]
    {\includegraphics[scale=0.27]{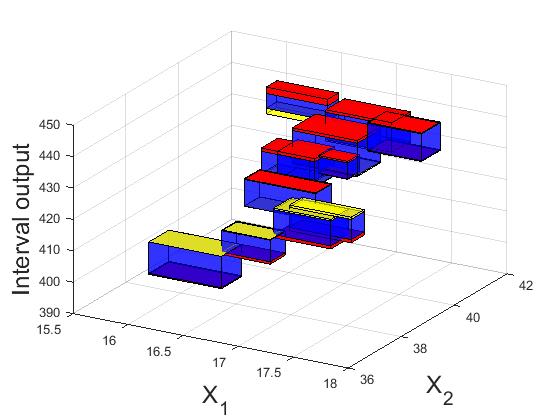}}
    \hfill
\subfigure[For $w=0.3$ and starting point is $(6, 25)$]
    {\includegraphics[scale=0.27]{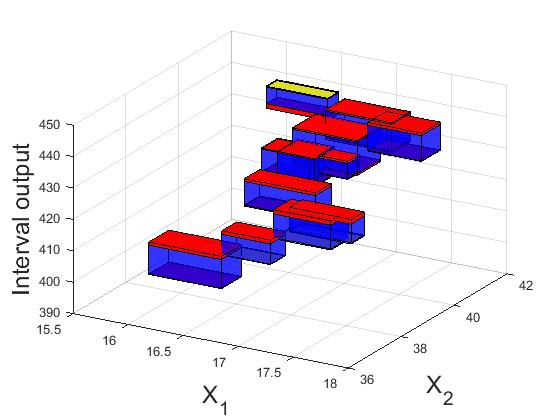}}
    \hfill
\subfigure[For $w=0.3$ and starting point is $(11, 2)$]
    {\includegraphics[scale=0.27]{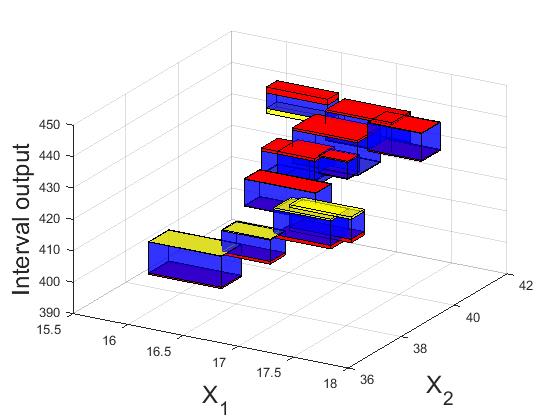}}
    \hfill
\subfigure[For $w=0.6$ and starting point is $(6, 25)$]
    {\includegraphics[scale=0.27]{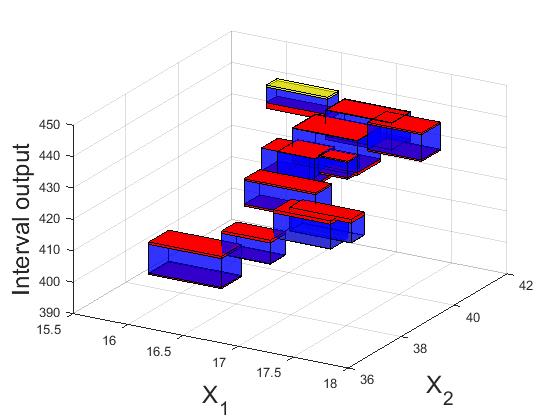}}
    \hfill
\subfigure[For $w=0.6$ and starting point is $(11, 2)$]
    {\includegraphics[scale=0.27]{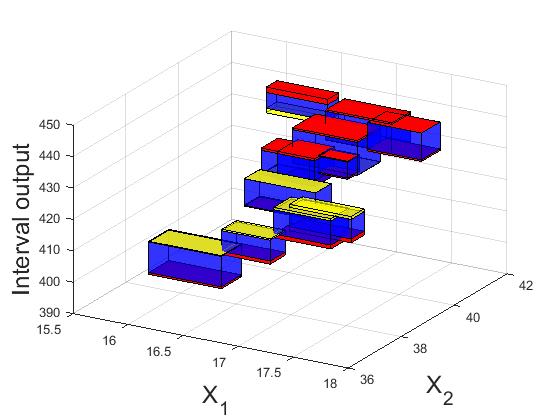}}
\subfigure[For $w=1$ and starting point is $(6, 25)$]
    {\includegraphics[scale=0.27]{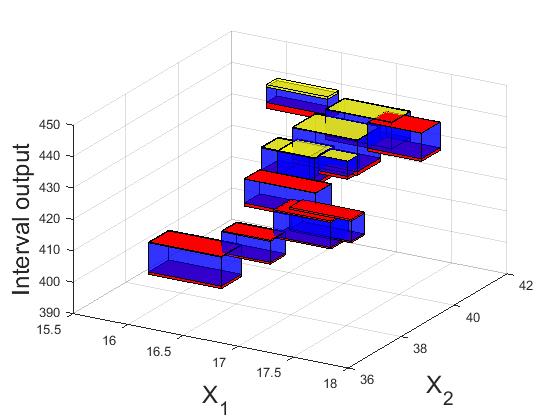}}
    \hfill
\subfigure[For $w=1$ and starting point is $(11, 2)$]
    {\includegraphics[scale=0.27]{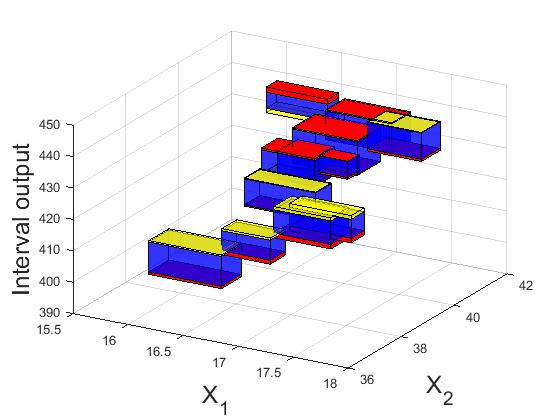}}
\caption{Comparison of actual interval-valued outputs and estimated interval-valued outputs of the given interval-valued features $\textbf{X}_k$, where the common portions of actual outputs and corresponding estimated outputs, the extended portions of actual outputs, and the extended portions of estimated outputs are illustrated by blue, red, and yellow regions, respectively.}\label{fsgmlasso}
\end{center}
\end{figure}

\section{Conclusion and Future Directions} \label{scf}

\noindent In this article, the $gH$-subgradient method to obtain efficient solutions to an unconstrained convex IOP and its algorithmic implementation (Algorithm \ref{algosg}) are presented. The convergence of the proposed method is studied (Theorem \ref{tcsm}). Considering an example (Example \ref{exsgm}), each step of the $gH$-subgradient method is explained. Using the derived technique a lasso problem with interval-valued features is solved. \\

In connection  with  the  present work, future research can be focus on the following works:

\begin{itemize}
    \item The boundedness of the step length $\alpha$ of the proposed $gH$-subgradient method.
    \item A $gH$-subgradient method to obtain efficient solutions to constrained convex IOPs.
    \item A $gH$-subgradient method to obtain efficient solutions to a dual IOP.
\end{itemize}

\section*{References}

\end{document}